\newcommand\footnoteref[1]{\protected@xdef\@thefnmark{\ref{#1}}\@footnotemark}
\definecolor{grey}{rgb}{0.95,0.95,0.95}
\definecolor{green}{rgb}{0.2,0.6,0.4}
\newcommand{\Psf}{\mathsf{P}}
\newcommand{\Qsf}{\mathsf{Q}}
\newcommand{\Bcal}{\mathcal{B}}
\newcommand{\Ccal}{\mathcal{C}}
\newcommand{\Dcal}{\mathcal{D}}
\newcommand{\Ucal}{\mathcal{U}}
\newcommand{\Ecal}{\mathcal{E}}
\newcommand{\Ocal}{\mathcal{O}}
\newcommand{\Qcal}{\mathcal{Q}}
\newcommand{\tuple}[1]{\left\langle #1 \right\rangle}
\newcommand{\s}[1]{\ensuremath{\sf{#1}}}
\DeclareMathOperator{\rca}{\s{RCA}_0}
\DeclareMathOperator{\wkl}{\s{WKL}}
\DeclareMathOperator{\wwkl}{\s{WWKL}}
\DeclareMathOperator{\rt}{\s{RT}}
\DeclareMathOperator{\srt}{\s{SRT}}
\DeclareMathOperator{\kl}{\s{KL}}
\definecolor{lightblue}{HTML}{e6e6e6}
\definecolor{lightred}{HTML}{eca6a6}
\definecolor{lightgreen}{RGB}{164,244,140}
\newtheoremstyle{custom}% name of the style to be used
  {10pt}% measure of space to leave above the theorem. E.g.: 3pt
  {10pt}% measure of space to leave below the theorem. E.g.: 3pt
  {\normalfont}% name of font to use in the body of the theorem
  {}% measure of space to indent
  {\bfseries}% name of head font
  {}% punctuation between head and body
  { }% space after theorem head; " " = normal interword space
  {}% Manually specify
\theoremstyle{custom}
\newtheorem{theorem}{Theorem}[section]
\newtheorem{lemma}[theorem]{Lemma}
\newtheorem{definition}[theorem]{Definition}
\newtheorem{question}[theorem]{Question}
\newtheorem{corollary}[theorem]{Corollary}
\begin{document}

\title{$\Pi^0_1$ encodability and omniscient reductions}
\author{
  Benoit Monin \and Ludovic Patey
}

\begin{abstract}
A set of integers~$A$ is computably encodable if every infinite set of integers has an infinite subset computing~$A$.
By a result of Solovay, the computably encodable sets are exactly the hyperarithmetic ones.
In this paper, we extend this notion of computable encodability 
to subsets of the Baire space and we characterize the $\Pi^0_1$ encodable compact sets 
as those who admit a non-empty $\Sigma^1_1$ subset.
Thanks to this equivalence, we prove that weak weak K\"onig's lemma is not strongly computably reducible
to Ramsey's theorem. This answers a question of Hirschfeldt and Jockusch.
\end{abstract}

\maketitle

\section{Introduction}

A set~$A \subseteq \omega$ is \emph{computably encodable} if every infinite set~$X \subseteq \omega$ has an infinite subset
computing~$A$. Jockusch and Soare~\cite{Jockusch1973Encodability} introduced various notions of encodability
and Solovay~\cite{Solovay1978Hyperarithmetically} characterized the computably encodable sets as the hyperarithmetical ones.
We extend the notion of computable encodability to collections of sets as follows.
A set~$\Ccal \subseteq \omega^\omega$ is \emph{$\Pi^0_1$ encodable} if every infinite set
$X \subseteq \omega$ has an infinite subset $Y$ such that 
$\Ccal$ admits a non-empty $Y$-computably bounded $\Pi^{0,Y}_1$ subset $\Dcal \subseteq \omega^\omega$.
By this, we mean that $\Dcal = [T]$ for some $Y$-computable tree~$T$ whose nodes are bounded by a $Y$-computable function.
Our main result asserts that the compact sets that are $\Pi^0_1$ encodable are exactly those 
admitting a non-empty $\Sigma^1_1$ subset. This extends Solovay's theorem as 
the members of the $\Sigma^1_1$ singletons and these of the computably bounded $\Pi^0_1$ singletons
are exactly the hyperarithmetic ones~\cite{Spector1955Recursive} and the computable ones, respectively.
Our motivations follow two axis. 

First, the development of \emph{mass problems}
such as Muchnik and Medvedev degrees~\cite{Hinman2012survey} revealed finer computational behaviors
than those captured by the Turing degrees. For example, the cone avoidance basis theorem~\cite{Jockusch197201}
asserts that the PA degrees are of no help to compute a single incomputable set of integers.
However, it would be simplistic to deduce that PA degrees carry no computational power.
For example, they enable one to compute separating sets given two computably inseparable c.e.\ sets.
This work can therefore be seen as part of a program of extending core computability-theoretic theorems about Turing degrees
to their generalized statements about mass problems.

Our second motivation comes from the reverse mathematics and the computable analysis of Ramsey's theorem.
Computable encodability is a very important feature of Ramsey's theorem, as for every $k$-coloring of~$[\omega]^n$,
and every infinite set~$X$, there is an infinite homogeneous subset contained in~$X$. Computable encodability
provides a formal setting to many intuitions about the computational weakness of Ramsey's theorem.
In particular, we use this notion to answer a question asked by Hirschfeldt and Jockusch~\cite{Hirschfeldtnotions} about the link
between variants of K\"onig's lemma and Ramsey's theorem over strong computable reducibility.

\subsection{Reductions between mathematical problems}

A \emph{mathematical problem} $\Psf$ is specified by a collection of \emph{instances},
coming together with a collection of \emph{solutions}.
Many ordinary theorems can be seen as mathematical problems. For example, K\"onig's lemma ($\kl$)
asserts that every infinite, finitely branching tree admits an infinite path.
In this setting, an instance of $\kl$ is an infinite, finitely branching tree~$T$,
and a solution to~$T$ is any infinite path~$P \in [T]$.

There are many ways to compare the strength of mathematical problems.
Among them, reverse mathematics study their logical consequences~\cite{Simpson2009Subsystems}.
More recently, various notions of effective reductions have been proposed to compare mathematical problems,
namely, Weihrauch reductions~\cite{Brattka2015Uniform,Dorais2016uniform}, computable reductions~\cite{Hirschfeldtnotions}, 
computable entailment~\cite{Shore2010Reverse}, among others.
A problem~$\Psf$ is \emph{computably reducible} to another problem~$\Qsf$ (written $\Psf \leq_c \Qsf$) if every $\Psf$-instance~$I$
computes a $\Qsf$-instance~$J$ such that every solution to~$J$ computes relative to~$I$ a solution to~$I$.
$\Psf$ is \emph{Weihrauch reducible} to $\Qsf$ (written $\Psf \leq_W \Qsf$) if moreover 
this computable reduction is witnessed by two fixed Turing functionals.
There exist strong variants of computable and Weihrauch reductions written $\Psf \leq_{sc} \Qsf$
and $\Psf \leq_{sW} \Qsf$, respectively, where no access to the $\Psf$-instance~$I$ is allowed in the backward reduction.
In this article, we shall focus on strong computable reduction.

Due to the range of potential definitions of effective reductions, there is a need to give a justification
about the choices of the definition. 
An effective reduction from~$\Psf$ to~$\Qsf$ should reflect some computational aspect of the relationship between $\Psf$ and $\Qsf$.
The more precise the reduction is, the more insights it gives about the links between the two problems.
As it happens, many proofs that~$\Psf$ is not strongly computably reducible to~$\Qsf$ actually produce
a single $\Psf$-instance~$I$ such that for every $\Qsf$-instance~$J$, computable in~$I$ or not,
there is a solution to~$J$ computing no solution to~$I$. Such a relation suggests a deep structural 
difference between the problems~$\Psf$ and~$\Qsf$, in that even with a perfect knowledge of~$I$,
there is no way to encode enough information in the~$\Qsf$-instance to solve~$I$.
We shall therefore define $\Psf$ to be \emph{strongly omnisciently computably reducible} to~$\Qsf$ (written $\Psf \leq_{soc} \Qsf$)
if for every $\Psf$-instance~$I$, there is a $\Qsf$-instance~$J$ such that every solution to~$J$ computes a solution to~$I$.

\subsection{K\"onig's lemma and Ramsey's theorem}\label{subsect:kl-and-rt}

K\"onig's lemma and Ramsey's theorem are core theorems from mathematics,
both enjoying a special status in reverse mathematics.

\begin{definition}[Various K\"onig lemmas]
$\kl$ is the statement ``Every infinite finitely-branching tree has an infinite path''.
$\wkl$ is the restriction of~$\kl$ to binary trees.
$\wwkl$ is the restriction of~$\wkl$ to binary trees of positive measure (A binary tree~$T \subseteq 2^{<\omega}$ has \emph{positive measure}
if $\lim_s \frac{|\{\sigma \in T : |\sigma| = s\}|}{2^s} > 0$). 
\end{definition}

Weak K\"onig's lemma captures compactness arguments 
and naturally arises from the study of ordinary theorems~\cite{Simpson2009Subsystems}. 
It is part of the so called \emph{Big Five}~\cite{Montalban2011Open}.
On the other hand, weak weak K\"onig's lemma can be thought of as
asserting the existence of randomness in the sense of Martin-L\"of~\cite{Downey2010Algorithmic}.
Although weak K\"onig's lemma is strictly weaker than K\"onig's lemma in reverse mathematics
and over computable reducibility, the statements are trivially equivalent over strong omniscient computable reducibility.
Indeed, given any problem~$\Psf$ admitting an instance with at least one solution~$S$,
one can define a binary tree whose unique path is a binary coding of~$S$. In particular, $\kl \leq_{soc} \wkl$.
Weak weak K\"onig's lemma, as for him, remains strictly weaker than K\"onig's lemma over
strong omniscient computable reducibility, since the measure of the set of oracles computing a non-computable
set is null~\cite{Sacks1963Degrees}. Therefore one can choose any tree with a unique incomputable path
as an instance of K\"onig's lemma to show that $\kl \nleq_{soc} \wwkl$

\begin{definition}[Ramsey's theorem]
A subset~$H$ of~$\omega$ is~\emph{homogeneous} for a coloring~$f : [\omega]^n \to k$ (or \emph{$f$-homogeneous}) 
if each $n$-tuples over~$H$ are given the same color by~$f$. 
$\rt^n_k$ is the statement ``Every coloring $f : [\omega]^n \to k$ has an infinite $f$-homogeneous set'',
$\rt^n_{<\infty}$ is $(\forall k)\rt^n_k$ and~$\rt$ is $(\forall n)\rt^n_{<\infty}$.
\end{definition}

Ramsey's theorem received a lot of attention in reverse mathematics
since it is one of the first examples of statements escaping the Big Five phenomenon.
There is profusion of litterature around the reverse mathematics and computable analysis of 
Ramsey's theorem~\cite{Jockusch1972Ramseys,Seetapun1995strength,Cholak2001strength,Hirschfeldt2008strength}. 
In particular, $\rt^n_k$ is equivalent to~$\kl$ in reverse mathematics
for any standard~$n \geq 3$ and~$k \geq 2$~\cite{Simpson2009Subsystems} and $\rt^2_k$ is strictly 
in between $\rca$ and~$\rt^3_k$~\cite{Seetapun1995strength}.
More recently, there has been studies of Ramsey's theorem under various notions of reducibility.
Let~$\srt^2_k$ denote the restriction of $\rt^2_k$ to \emph{stable} colorings,
that is, functions~$f : [\omega]^2 \to k$ such that~$\lim_s f(x,s)$ exists for every~$x$.
In what follows, $k \geq 2$.
Brattka and Rakotoniaina~\cite{Brattka2015Uniform} and Hirschfeldt and Jockusch~\cite{Hirschfeldtnotions}
studied the Weihrauch degrees of Ramsey's theorem and independently proved that~$\rt^1_{k+1} \not \leq_W \srt^2_k$
and~$\rt^n_{<\infty} \leq_{sW} \rt^{n+1}_2$. Note that the reduction $\rt^1_{k} \leq_{sW} \srt^2_k$ trivially holds.

From the point of view of omniscient reductions, the above discussion about weak K\"onig's lemma shows that~$\rt \leq_{soc} \wkl$.
Dzhafarov and Jockusch~\cite{Dzhafarov2009Ramseys} proved that~$\srt^2_2 \not \leq_{soc} \rt^1_{<\infty}$.
Hirschfeldt and Jockusch~\cite{Hirschfeldtnotions} and Patey~\cite{Patey2016weakness} independently proved
that~$\rt^1_{k+1} \not \leq_{soc} \rt^1_k$, later strengthened by
Dzhafarov, Patey, Solomon and Westrick~\cite{Dzhafarov2016Ramseys}, who proved that~$\rt^1_{k+1} \not \leq_{soc} \srt^2_k$
and that $\rt^2_2 \not \leq_{soc} \srt^2_{<\infty}$.
Some differences between strong computable reducibility and strong omniscient computable reducibility
are witnessed by Ramsey's theorem. For example, the second author~\cite{Patey2016weakness} 
proved that~$\srt^2_{k+1} \not \leq_{sc} \rt^2_k$,
while the following argument shows that~$\srt^2_{<\infty} \leq_{soc} \rt^2_2$: Given a stable coloring~$f : [\omega]^2 \to k$,
let $g(x, y) = 1$ iff $f(x, y) = \lim_s f(y, s)$. Every infinite $g$-homogeneous set is for color 1 and
is an $f$-homogeneous set.

Hirschfeldt and Jockusch compared Ramsey's theorem and K\"onig's lemma over strong omniscient computable reducibility
and proved that $\rt^1_2 \not \leq_{soc} \wwkl$ and that~$\wkl \not \leq_{soc} \rt$.
They asked whether weak weak K\"onig's lemma is a consequence of Ramsey's theorem over strong computable reducibility.
We answer negatively by proving the stronger separation~$\wwkl \not \leq_{soc} \rt$.

\subsection{Notation}

Given a set~$A$ and some integer~$n \in \omega$, we let~$[A]^n$ denote the collection of all unordered subsets of~$A$ of size~$n$.
Accordingly, we let $A^{<\omega}$ and $[A]^{\omega}$ denote the collection of all finite and infinite subsets of $A$, respectively.
Given $a \in [\omega]^{<\omega}$ and $X \in [\omega]^{\omega}$ such that~$\max a < \min X$,
we let $\tuple{a, X}$ denote the set of all $B \in [\omega]^\omega$
such that $a \subseteq B \subseteq a \cup X$.
The pairs $\tuple{a, X}$ are called \emph{Mathias conditions} and form,
together with $\emptyset$, the basic open sets of the \emph{Ellentuck topology}.

Given a function~$f \in \omega^\omega$ and an integer~$t \in \omega$,
we write~$f^t$ for the set of all strings~$\sigma \in \omega^{<\omega}$
of length~$t$ such that~$(\forall x < t)\sigma(x) \leq f(x)$.
Accordingly, we write~$f^{<\omega}$ for $\bigcup_{t \in \omega} f^t$.

\section{Computable encodability}

A function~$f \in \omega^\omega$ is a \emph{$\Pi^0_1$ modulus} of a set~$\Ccal \subseteq \omega^\omega$
if $\Ccal$ has a non-empty $g$-computably bounded $\Pi^{0,g}_1$ subset for every function~$g \geq f$.
A function~$f \in \omega^\omega$ is a modulus of a set~$A \in \omega^\omega$
if $g \geq_T A$ for every~$g \geq f$.
Note that the notion of $\Pi^0_1$ modulus of the singleton~$\{A\}$ coincides with the existing notion of modulus of the set~$A$
since the members of computably bounded $\Pi^0_1$ singletons are computable.
The purpose of this section is to prove the following main theorem.

\begin{theorem}\label{thm:comp-enc-compact-sigma11}
Fix a compact set~$\Ccal \subseteq \omega^\omega$. The following are equivalent:
\begin{itemize}
	\item[(i)] $\Ccal$ is $\Pi^0_1$ encodable
	\item[(ii)] $\Ccal$ admits a $\Pi^0_1$ modulus
	\item[(iii)] $\Ccal$ has a non-empty $\Sigma^1_1$ subset
\end{itemize}
\end{theorem}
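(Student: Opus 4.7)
The plan is to establish the cycle (i)$\Rightarrow$(iii)$\Rightarrow$(ii)$\Rightarrow$(i), where (iii)$\Rightarrow$(ii) carries the real content.

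For (i)$\Rightarrow$(iii), I would simply instantiate $\Pi^0_1$ encodability at $X = \omega$. This yields an infinite $Y \subseteq \omega$ for which $\Ccal$ already admits a non-empty $Y$-computably bounded $\Pi^{0,Y}_1$ subset, and any such subset is arithmetic in $Y$, hence $\Sigma^1_1$.

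For (ii)$\Rightarrow$(i), given a $\Pi^0_1$ modulus $f$ and an infinite $X \subseteq \omega$, I would thin $X$ into an infinite $Y = \{y_0 < y_1 < \dots\} \subseteq X$ by picking, at stage $n$, the least element of $X$ strictly above both $y_{n-1}$ and $f(n)$. The principal function of $Y$ is then $Y$-computable and dominates $f$, so applying the modulus to it produces a non-empty $Y$-computably bounded $\Pi^{0,Y}_1$ subset of $\Ccal$.

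The heart of the proof is (iii)$\Rightarrow$(ii). Let $\Dcal \subseteq \Ccal$ be a non-empty $\Sigma^1_1$ set, written as a projection $\Dcal = p[T]$ of a computable tree $T \subseteq \omega^{<\omega} \times \omega^{<\omega}$ with matching levels. Compactness of $\Ccal$ provides an $h \in \omega^\omega$ pointwise bounding every element of $\Ccal$, and non-emptiness of $\Dcal$ lets us fix a witnessing pair $(x_0, y_0) \in [T]$. I would then propose
\[
  f(n) := \max\bigl(h(n),\, y_0(n)\bigr)
\]
as the candidate modulus. For any $g \geq f$, consider
\[
  \Ccal_g := \bigl\{\, x \in \omega^\omega : x \leq g \text{ pointwise and } \forall n\, \exists \tau \in g^n,\; (x \uh n,\, \tau) \in T \,\bigr\}.
\]
This set is $g$-computably bounded by design and $\Pi^{0,g}_1$, since each inner existential ranges over the finite set $g^n$ and is $g$-decidable. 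It is non-empty because $x_0 \in \Ccal_g$, witnessed at level $n$ by $\tau = y_0 \uh n$. The remaining inclusion $\Ccal_g \subseteq \Ccal$ follows by a compactness argument: for any $x \in \Ccal_g$, the set $S_x := \{\tau \in g^{<\omega} : (x \uh |\tau|,\, \tau) \in T\}$ is closed under prefixes, infinite, and finitely branching (bounded by $g$), so König's lemma gives some $y \in [S_x]$, whence $(x, y) \in [T]$ and $x \in \Dcal \subseteq \Ccal$.

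The step I expect to require the most care is this final König's lemma argument: it is what converts pointwise witnesses $\tau_n \in g^n$ into an actual projection witness $y$, and is what keeps $\Ccal_g$ genuinely $\Pi^{0,g}_1$ rather than collapsing to a $\Sigma^{0,g}_1$-projection. Compactness of $\Ccal$ is used precisely here, both at the outset (to produce $h$ so that the first coordinate can be absorbed into $g$) and implicitly through the finite branching of $S_x$.
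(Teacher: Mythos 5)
Your directions (ii)$\Rightarrow$(i) and (iii)$\Rightarrow$(ii) are essentially the paper's own arguments; in (iii)$\Rightarrow$(ii) you bound the first coordinate by a function $h$ obtained from compactness of $\Ccal$ where the paper simply uses the witness $x_0$ itself, but the König's lemma step and the definition of $\Ccal_g$ are the same, and this direction is fine either way.

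The fatal error is in (i)$\Rightarrow$(iii), which you dispatch in one line but which is in fact the entire content of the theorem and occupies the bulk of the paper. You instantiate $\Pi^0_1$ encodability at $X=\omega$ to get a $Y$ together with a non-empty $Y$-computably bounded $\Pi^{0,Y}_1$ set $\Dcal\subseteq\Ccal$, and then assert that $\Dcal$ ``is arithmetic in $Y$, hence $\Sigma^1_1$.'' This is false: $\Sigma^1_1$ in the statement is \emph{lightface} (parameter-free) $\Sigma^1_1$, as the paper makes explicit by tying the theorem to Spector's result that the members of $\Sigma^1_1$ singletons are exactly the hyperarithmetic reals. A $\Pi^{0,Y}_1$ set is $\Sigma^{1,Y}_1$, not $\Sigma^1_1$, and the $Y$ you get from the definition may be arbitrarily complicated. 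If your argument worked, then applied to $\Ccal=\{A\}$ it would show that any computably encodable $A$ is hyperarithmetic merely because some $Y\geq_T A$ exists, which would make Solovay's theorem trivial; it is not. The universal quantifier over all infinite $X$ in the definition of $\Pi^0_1$ encodability is where the strength lives, and it cannot be discharged by choosing a single $X$.

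The paper instead proves the contrapositive: assuming $\Ccal$ is compact with no non-empty $\Sigma^1_1$ subset, it constructs an infinite set $Y$ such that for every $G\in[Y]^\omega$, no $G$-computably bounded $\Pi^{0,G}_1$ class is contained in $\Ccal$, so the encodability condition fails at $X=Y$. This requires a genuine forcing argument: a $\Sigma^1_1$-immunity basis theorem (Theorem~\ref{thm:spine-basis-theorem}, generalizing Gandy--Kreisel--Tait, proved via genericity for the Gandy--Harrington topology), and an iterated Mathias-condition construction (Lemmas~\ref{lem:encod-mass-set} and~\ref{lem:encod-mass-mathias}, which in turn invoke the Galvin--Prikry theorem) that, for each potential functional $\Phi_j$, either kills $\Ccal\cap[\Phi_j^G]$ outright or forces $\Phi_j^G$ to enumerate infinitely many strings leaving $\Ccal$, so that $[\Phi_j^G]$ can never be a computably bounded subtree of $\Ccal$. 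None of these ideas appear in your proposal, and the fact that all three of your implications came out easy should itself have been a warning sign.
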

\begin{proof}
$(ii) \Rightarrow (i)$: Let~$f$ be a $\Pi^0_1$ modulus of~$\Ccal$.
For every set~$X \in [\omega]^\omega$, there is a set~$Y \in [X]^\omega$
such that~$p_Y \geq f$, where $p_Y(x)$ is the $x$th element of~$Y$ in increasing order. 
In particular, $\Ccal$ has a non-empty $\Pi^{0,Y}_1$ subset.
$(iii) \Rightarrow (ii)$: Let~$R(X, Y, z)$ be a computable predicate
such that $\Dcal = \{X \in \omega^\omega : (\exists Y \in \omega^\omega)(\forall z)R(X, Y, z) \}$
is a non-empty subset of~$\Ccal$. Since $\Dcal \neq \emptyset$, 
there are some~$X, Y \in \omega^\omega$ such that $R(X, Y, z)$ holds for every~$z \in \omega$.
We claim that the function~$f$ defined by $f(x) = max(X(x), Y(x))$ is a $\Pi^0_1$ modulus of~$\Ccal$.
To see this, pick any function~$g \geq f$. The set
$
\{ X \leq g : (\forall z \in \omega)(\exists \rho \in g^z)(\forall y < z)R(X, \rho, y) \}
$
is a non-empty $\Pi^{0,g}_1$ subset of~$\Ccal$ bounded by~$g$.
% (TODO : By Konig's lemma)\ludovic{K\"onig's lemma has nothing to do here. We use the choice of~$f$.}.
The remainder of this section will be dedicated to the proof of $(i) \Rightarrow (iii)$.
\end{proof}

\begin{corollary}[Solovay~\cite{Spector1955Recursive}, Groszek and Slaman~\cite{Groszek2007Moduli}]
Fix a set~$A \in \omega^\omega$. The following are equivalent:
\begin{itemize}
	\item[(i)] $A$ is computably encodable
	\item[(ii)] $A$ admits a modulus
	\item[(iii)] $A$ is hyperarithmetic
\end{itemize}
\end{corollary}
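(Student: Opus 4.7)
The plan is to obtain the corollary by specializing Theorem~\ref{thm:comp-enc-compact-sigma11} to the compact set $\Ccal = \{A\}$ (a singleton is trivially compact in $\omega^\omega$) and then translating each of the three conditions of the theorem into the corresponding single-set statement. The engine of the proof is therefore already available; all that remains is to verify that the specialized conditions match the classical notions of computable encodability, modulus, and hyperarithmeticity. The only ingredients beyond Theorem~\ref{thm:comp-enc-compact-sigma11} are the two characterizations already recorded in the introduction: members of $Y$-computably bounded $\Pi^{0,Y}_1$ singletons are exactly the $Y$-computable sets, and members of $\Sigma^1_1$ singletons are exactly the hyperarithmetic sets (Spector).

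First I would handle $(i) \Leftrightarrow$ (computable encodability of $A$). For $\Ccal = \{A\}$, the only non-empty subset of $\Ccal$ is $\{A\}$ itself, so $\Ccal$ being $\Pi^0_1$ encodable reads: every infinite $X \subseteq \omega$ has an infinite subset $Y$ such that $\{A\}$ is a non-empty $Y$-computably bounded $\Pi^{0,Y}_1$ class. By the characterization of computably bounded $\Pi^0_1$ singletons recalled above, this holds iff $A \leq_T Y$. Hence condition $(i)$ of Theorem~\ref{thm:comp-enc-compact-sigma11} applied to $\{A\}$ is literally the statement that $A$ is computably encodable.

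Next I would handle $(ii) \Leftrightarrow$ ($A$ admits a modulus). An $f \in \omega^\omega$ is a $\Pi^0_1$ modulus of $\{A\}$ iff for every $g \geq f$ the singleton $\{A\}$ has a non-empty $g$-computably bounded $\Pi^{0,g}_1$ subset, which by the same characterization means $A \leq_T g$. This is precisely the definition of $f$ being a modulus of $A$, so the equivalence is immediate.

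Finally, for $(iii) \Leftrightarrow$ ($A$ is hyperarithmetic), the only non-empty subset of $\{A\}$ is $\{A\}$ itself, so $\{A\}$ has a non-empty $\Sigma^1_1$ subset iff $\{A\}$ is a $\Sigma^1_1$ singleton, i.e.\ iff $A$ is a member of a $\Sigma^1_1$ singleton. By Spector's theorem this is equivalent to $A$ being hyperarithmetic. There is no real obstacle here: this corollary is essentially a dictionary between Theorem~\ref{thm:comp-enc-compact-sigma11} and its singleton instance, and the only delicate point is making sure compactness of $\{A\}$ is indeed trivial and that the two ``singletons $=$ computable/hyperarithmetic'' facts are invoked in the right direction. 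Everything else is unpacking definitions.
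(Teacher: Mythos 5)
Your proposal is correct and takes essentially the same approach as the paper: both specialize Theorem~\ref{thm:comp-enc-compact-sigma11} to $\Ccal=\{A\}$ and then unwind each of the three conditions using that the only nonempty subset of a singleton is itself, that members of computably bounded $\Pi^{0,Y}_1$ singletons are exactly the $Y$-computable sets, and Spector's characterization of members of $\Sigma^1_1$ singletons as the hyperarithmetic sets.
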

\begin{proof}
By Theorem~\ref{thm:comp-enc-compact-sigma11}, it suffices to prove that~$A$ is computably encodable, admits
a modulus, and is hyperarithmetic if and only if $\{A\}$ is $\Pi^0_1$ encodable, admits a $\Pi^0_1$ modulus
and has a non-empty $\Sigma^1_1$ subset, respectively.

By Spector~\cite{Spector1955Recursive}, a set~$A \in \omega^\omega$ is hyperarithmetic iff it is the unique member
of a $\Sigma^1_1$ singleton set~$\Ccal \subseteq \omega^\omega$. Therefore, $A$ is hyperarithmetic iff $\{A\}$
has a non-empty $\Sigma^1_1$ subset.
Every modulus of $A \in \omega^\omega$ is a $\Pi^0_1$ modulus of $\{A\}$.
Conversely, if $\{A\}$ admits a $\Pi^0_1$ modulus~$f$, then for every~$g \geq f$, 
$\{A\}$ is a $g$-computably bounded $\Pi^{0,g}_1$ singleton, 
so $A$ is $g$-computable. Therefore~$f$ is a modulus of~$A$.
If~$A$ is computably encodable, then $\{A\}$ is $\Pi^0_1$ encodable since every $X$-computable set is an $X$-computably bounded
$\Pi^{0,X}_1$ singleton.
Conversely, suppose that~$\{A\}$ is $\Pi^0_1$ encodable. Then, for every set~$X \in [\omega]^\omega$,
there is a set~$Y \in [X]^\omega$ such that~$\{A\}$ is a $Y$-computably bounded $\Pi^0_1$ class.
In particular, $Y$ computes $A$.
\end{proof}

A \emph{basis} for the $\Sigma^1_1$ sets is a collection of sets~$\Bcal \subseteq \omega^\omega$
such that~$\Bcal \cap \Dcal \neq \emptyset$ for every non-empty $\Sigma^1_1$ set $\Dcal \subseteq \omega^\omega$.
Gandy, Kreisel and Tait~\cite{Gandy1960Set} proved that whenever a set~$A \in \omega^\omega$ is non-hyperarithmetic,
every non-empty $\Sigma^1_1$ set~$\Dcal \subseteq \omega^\omega$ has a member~$X$ such that~$A$ is not hyperarithmetic in~$X$.
We need to extend their basis theorem by replacing non-hyperarithmetic sets by compact sets with no non-empty $\Sigma^1_1$ subsets
in order to prove the remaining direction of Theorem~\ref{thm:comp-enc-compact-sigma11}.
Note that when we apply Theorem~\ref{thm:spine-basis-theorem} with~$\Ccal = \{A\}$
for some non-hyperarithmetic set~$A$, we get back the non-hyperarithmetic basis theorem
of Gandy, Kreisel and Tait.

\begin{theorem}[$\Sigma^1_1$-immunity basis theorem]\label{thm:spine-basis-theorem}
For every compact set $\Ccal \subseteq \omega^\omega$ with no non-empty $\Sigma^1_1$ subset,
and every non-empty $\Sigma^1_1$ set $\Dcal \subseteq \omega^\omega$,
there is some~$X \in \Dcal$ such that $\Ccal$ has no non-empty $\Sigma^{1,X}_1$ subset.
\end{theorem}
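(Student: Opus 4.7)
The plan is to prove the contrapositive: assuming that every $X \in \Dcal$ is such that $\Ccal$ admits a non-empty $\Sigma^{1,X}_1$ subset, I will extract a non-empty $\Sigma^1_1$ subset of~$\Ccal$, contradicting the hypothesis on~$\Ccal$. Specialized to $\Ccal = \{A\}$ with $A$ non-hyperarithmetic, the argument will recover the Gandy--Kreisel--Tait basis theorem.

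Write $\Dcal = \{X : \exists Y,\ (X, Y) \in [T]\}$ for some computable tree $T \subseteq \omega^{<\omega} \times \omega^{<\omega}$, and run a forcing construction whose conditions are pairs $p = (\sigma, \tau) \in T$ ordered by extension. I aim to build a sufficiently generic pair $(X, Y) \in [T]$ such that, for every index~$e$, the $\Sigma^{1,X}_1$ set $\Ecal_e^X$ is either empty or contains a point outside~$\Ccal$; the resulting $X \in \Dcal$ will then witness the conclusion.

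At each stage, an index~$e$ is treated at a current condition~$p$. Either there exists an extension $p' \leq p$ forcing $\Ecal_e^X = \emptyset$ along every $(X, Y) \in [T]$ extending~$p'$, in which case we take such~$p'$; or no such extension exists. In the latter case I claim some extension of~$p$ forces $\Ecal_e^X \not\subseteq \Ccal$. Indeed, if no such extension existed, then the set
$$
\Ecal^\ast \;=\; \bigl\{\, Z \in \omega^\omega : \exists (X', Y') \in [T] \text{ extending } p,\ Z \in \Ecal_e^{X'} \,\bigr\}
$$
would be a $\Sigma^1_1$ set (since $T$ is computable and $Z \in \Ecal_e^{X'}$ is uniformly $\Sigma^{1,X'}_1$), non-empty (failure of the first alternative applied to~$p$ itself supplies an extension with $\Ecal_e^{X'} \neq \emptyset$, hence a witness~$Z$), and contained in~$\Ccal$ (any $Z \notin \Ccal$ in $\Ecal^\ast$ would, by compactness of~$\Ccal$, admit a finite prefix disjoint from~$\Ccal$, which combined with a finite witness in~$T_e^{X'}$ producing~$Z$ would yield a finite extension of~$p$ forcing a point of $\Ecal_e^X$ outside~$\Ccal$, against the standing assumption). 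Thus $\Ecal^\ast$ would be a non-empty $\Sigma^1_1$ subset of~$\Ccal$, contradicting the hypothesis on~$\Ccal$ and closing the case analysis.

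The main obstacle will be setting up the forcing relation so that the two higher-complexity statements ``$\Ecal_e^X = \emptyset$'' (well-foundedness of the $X$-computable tree whose projection is $\Ecal_e^X$, a $\Pi^{1,X}_1$ assertion) and ``$\Ecal_e^X \not\subseteq \Ccal$'' (existence of an infinite branch of that tree projecting outside~$\Ccal$, a $\Sigma^{1,X}_1$ assertion) can be meaningfully decided by conditions. I expect Spector--Gandy style conditions enriched with ordinal-rank information on the relevant trees to be adequate; the compactness of~$\Ccal$ will be used both to guarantee that ``$Z \notin \Ccal$'' is witnessed by a finite prefix of~$Z$ and to keep the complexity of $\Ecal^\ast$ at the $\Sigma^1_1$ level. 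Standard bookkeeping across the countably many indices will ensure that the $Y$-component of the generic converges, yielding $X \in \Dcal$.
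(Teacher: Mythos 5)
Your overall plan matches the paper's: build a ``generic'' $X\in\Dcal$ so that, for each index $e$, the $\Sigma^{1,X}_1$ set $\Ecal_e^X$ is either empty or meets the complement of $\Ccal$, via a density/dichotomy argument. Your $\Ecal^\ast$ argument is also the right shape—it parallels the case analysis in the paper's Lemma~\ref{lem:spine-basis-theorem-iter}. The gap is in the forcing framework you chose.

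You take conditions to be \emph{finite} pairs $p=(\sigma,\tau)\in T$. With finite conditions neither alternative can be forced: ``$\Ecal_e^X=\emptyset$'' is $\Pi^{1,X}_1$ and ``$\Ecal_e^X\not\subseteq\Ccal$'' is $\Sigma^{1,X}_1$, both far beyond what a finite initial segment of the oracle can decide. The specific place where this bites is your ``contained in $\Ccal$'' claim: you write that a finite prefix $\rho\prec Z$ with $[\rho]\cap\Ccal=\emptyset$, ``combined with a finite witness in $T_e^{X'}$ producing $Z$,'' yields a finite extension $p'$ of $p$ forcing $\Ecal_e^X\not\subseteq\Ccal$. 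But $Z\in\Ecal_e^{X'}$ is witnessed by an \emph{infinite} branch of an $X'$-computable tree, so there is no finite witness; and even granting $Z\in\Ecal_e^{X'}$ for the specific $X'$, that says nothing about $\Ecal_e^X$ for an arbitrary $(X,Y)\in[T]$ extending a finite $p'$. So no finite extension can force the needed $\Sigma^{1,X}_1$ fact, and the contradiction with the ``standing assumption'' does not materialize. You flag this yourself (``Spector--Gandy style conditions enriched with ordinal-rank information''), but it is precisely here that the work lies, and the proposal leaves it undone.

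The paper resolves exactly this by forcing with \emph{nonempty $\Sigma^1_1$ sets} rather than finite strings, i.e., working in the Gandy--Harrington topology, and invoking the standard fact that this topology is Baire. In that setting your dichotomy goes through verbatim: if some $Z\in\Ecal^\ast$ lies outside $\Ccal$, pick $\rho\prec Z$ with $[\rho]\cap\Ccal=\emptyset$, and the $\Sigma^1_1$ set $\{X\in\Dcal : (\exists Z'\succ\rho)\,Z'\in\Ecal_e^X\}$ is a nonempty condition refining the current one and forcing $\Ecal_e^X\not\subseteq\Ccal$; otherwise $\Ecal^\ast$ is a (possibly empty) $\Sigma^1_1$ subset of $\Ccal$, hence empty, so the current condition already forces $\Ecal_e^X=\emptyset$. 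If you replace finite conditions with $\Sigma^1_1$ conditions your argument essentially \emph{becomes} the paper's, so I would not count this as a different route—rather as the same route with a key step (choice of forcing notion) missing. Minor additional points: the finite prefix $\rho$ exists by closedness of $\Ccal$, not compactness; and keeping $\Ecal^\ast$ at level $\Sigma^1_1$ is automatic from $T$ being computable, compactness plays no role there either.
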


Theorem~\ref{thm:spine-basis-theorem} is an easy consequence of the following lemma.

\begin{lemma}\label{lem:spine-basis-theorem-iter}
Fix a compact set $\Ccal \subseteq \omega^\omega$ with no non-empty $\Sigma^1_1$ subset
and a $\Sigma^1_1$ predicate $P(X, Y)$.
Every non-empty $\Sigma^1_1$ set $\Dcal \subseteq \omega^\omega$ has a non-empty $\Sigma^1_1$ subset~$\Ecal$
such that $\{ Y \in \omega^\omega : P(X, Y) \} \not \subseteq \Ccal$ for every $X \in \Ecal$.
\end{lemma}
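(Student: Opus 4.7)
The plan is to pick a single finite string $\sigma$, non-constructively, such that the basic open set $\{Y \in \omega^\omega : \sigma \prec Y\}$ both meets the $\Sigma^1_1$ projection of $P$ over $\Dcal$ and is disjoint from the compact set $\Ccal$. With such a $\sigma$ in hand, the set
\[
\Ecal := \{X \in \Dcal : \exists Y,\ P(X, Y) \wedge \sigma \prec Y\}
\]
is $\Sigma^1_1$ (with $\sigma$ serving as a finite-string parameter), and every $X \in \Ecal$ admits a witness $Y$ extending $\sigma$, hence lying outside $\Ccal$, which gives exactly $\{Y : P(X, Y)\} \not\subseteq \Ccal$.

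Concretely, I would first form the $\Sigma^1_1$ projection $\Fcal := \{Y : \exists X \in \Dcal,\ P(X, Y)\}$. The hypothesis on $\Ccal$ forces $\Fcal \not\subseteq \Ccal$ as soon as $\Fcal \neq \emptyset$; the degenerate case $\Fcal = \emptyset$ is not covered by the literal statement but does not arise in the intended applications. In the interesting case, I would choose some $Y_0 \in \Fcal \setminus \Ccal$ together with a witness $X_0 \in \Dcal$ satisfying $P(X_0, Y_0)$. Since $\Ccal$ is compact and hence closed, its complement is open, so there is some $\sigma \prec Y_0$ with $\{Y : \sigma \prec Y\} \cap \Ccal = \emptyset$. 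Fixing this $\sigma$ and defining $\Ecal$ as above, the verification is routine: $X_0 \in \Ecal$, so $\Ecal$ is non-empty, and every $X \in \Ecal$ has a witness $Y$ extending $\sigma$, which lies outside $\Ccal$.

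The main obstacle is a definability subtlety: $\Ccal$ is not assumed to be arithmetically or analytically definable at all, so ``$Y \notin \Ccal$'' cannot be folded directly into a $\Sigma^1_1$ formula for $\Ecal$. The plan succeeds because one does not need a uniform description of the complement of $\Ccal$: a single finite piece of information $\sigma$, picked non-constructively from a point $Y_0 \in \Fcal \setminus \Ccal$, already encodes enough ``separation from $\Ccal$'' to play the role of an ordinary parameter. Compactness of $\Ccal$ enters exactly once, namely to locate $\sigma$ as an initial segment of $Y_0$ whose cylinder avoids $\Ccal$; everything else is routine.
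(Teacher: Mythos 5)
Your proof is correct and takes essentially the same route as the paper: pick $Y_0 \notin \Ccal$ satisfying $P(X_0,Y_0)$ for some $X_0 \in \Dcal$, use closedness of $\Ccal$ to isolate a cylinder $[\sigma] \ni Y_0$ disjoint from $\Ccal$, and set $\Ecal = \{X \in \Dcal : (\exists Y \succ \sigma)\, P(X,Y)\}$. Forming the projection $\Fcal$ first rather than splitting into cases on whether some section escapes $\Ccal$ is purely an organizational difference.

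One small point. You flag that the degenerate case $\Fcal = \emptyset$ is not covered by the literal statement; that diagnosis is right, and it in fact also applies to the paper's own second case, which takes $\Ecal = \Dcal$ when every section is empty --- but $\emptyset \subseteq \Ccal$, so the literal conclusion ``$\{Y : P(X,Y)\} \not\subseteq \Ccal$'' fails there. However, your claim that this case ``does not arise in the intended applications'' is not quite accurate: in the proof of the basis theorem one ranges over all $\Sigma^1_1$ predicates $P$, and for some of them every section over $\Dcal$ may well be empty. The harmless fix is that both the lemma and the definition of $\Ucal_P$ should say ``$\{Y : P(X,Y)\}$ is not a \emph{non-empty} subset of $\Ccal$,'' which is exactly what is used downstream and which your argument (together with taking $\Ecal = \Dcal$ in the degenerate case) does establish.
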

\begin{proof}
We reason by case analysis. In the first case, $\{ Y \in \omega^\omega : P(X, Y) \} \subsetneq \Ccal$
for some~$X \in \Dcal$. Let~$Y \not \in \Ccal$ be such that $P(X,Y)$ holds.
By closure of~$\Ccal$, there is some finite initial segment~$\sigma \prec Y$ such that~$[\sigma] \cap \Ccal = \emptyset$.
The $\Sigma^1_1$ set $\Ecal = \{ X \in \Dcal : (\exists Y \succ \sigma)P(X, Y) \}$ is non-empty
and satisfies the desired properties.
In the second case, for every~$X \in \Dcal$, $\{Y \in \omega^\omega : P(X, Y) \} \subseteq \Ccal$.
Then $\{ Y \in \omega^\omega : (\exists X \in \Dcal)P(X,Y) \}$ is a $\Sigma^1_1$ subset of~$\Ccal$,
and therefore must be empty. We can simply choose~$\Ecal = \Dcal$.
\end{proof}

\begin{proof}[Proof of Theorem~\ref{thm:spine-basis-theorem}]
Let us consider for any  $\Sigma^1_1$ predicate $P(X, Y)$, the union $\Ucal_P$ of all the $\Sigma^1_1$ sets $\Ecal$ such that $\{ Y \in \omega^\omega :P(X, Y) \} \not \subseteq \Ccal$ for every $X \in \Ecal$. By Lemma~\ref{lem:spine-basis-theorem-iter}, each $\Ucal_P$ is dense for the Gandy-Harrington topology (where open sets are those generated by the $\Sigma^1_1$ sets). It is well known that $\omega^\omega$ with the Gandy-Harrington topology is a Baire space. It follows that $\bigcap_P \Ucal_P$ is dense. In particular it has a non-empty intersection with any $\Sigma^1_1$ set. Also it is clear by the definition of $\Ucal_P$ that 
$\Ccal$ contains no $\Sigma^{1,X}_1$ subset for any $X \in \bigcap_P \Ucal_P$.
\end{proof}

We will now prove the core lemma from which we will deduce the last direction of Theorem~\ref{thm:comp-enc-compact-sigma11}.
In what follows, we assume that $\Gamma$ ranges over trees,
that is, if $\Gamma^v(\tau) \downarrow$, then $\Gamma^v(\sigma) \downarrow$ for every~$\sigma \preceq \tau$.
%A function~$f \in \omega^\omega$ \emph{bounds} a set~$\Ccal \subseteq \omega^\omega$
%if $g \leq f$ for every~$g \in \Ccal$. In particular, a set is compact if and only if it
%is bounded by a function. [TODO : Are you sure?] This notion was not used anyway.

\begin{lemma}\label{lem:encod-mass-set}
Fix a set $X \in [\omega]^\omega$ and a compact set~$\Ccal \subseteq \omega^\omega$
with no non-empty $\Sigma^{1,X}_1$ subset.
For every continuous function~$\Gamma$ and every $t \in \omega$, there
is a set~$Y \in [X]^\omega$ such that for every~$G \in [Y]^\omega$,
either $\Ccal \cap [\Gamma^G] = \emptyset$ or $\Gamma^G(\sigma)\downarrow = 1$
for some string $\sigma \in \omega^{<\omega}$ of length at least~$t$ such that $\Ccal \cap [\sigma] = \emptyset$.
Moreover, we can choose~$Y$ so that $\Ccal$ has no $\Sigma^{1,Y}_1$ subset.
\end{lemma}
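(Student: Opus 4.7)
The plan is to prove the lemma through a Mathias-style forcing argument combined with the $\Sigma^1_1$-immunity basis theorem (Theorem~\ref{thm:spine-basis-theorem}).

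The key preliminary observation is that if some $G \in [X]^\omega$ fails the conclusion---meaning $[\Gamma^G] \cap \Ccal \neq \emptyset$ and every $\sigma \in \Gamma^G$ of length at least $t$ satisfies $[\sigma] \cap \Ccal \neq \emptyset$---then by the closure of $\Ccal$ we actually get $[\Gamma^G] \subseteq \Ccal$. By compactness of $\Ccal$ with some bound $b \in \omega^\omega$, the tree $\Gamma^G$ above level $t$ is $b$-bounded. Hence $[\Gamma^G]$ is a non-empty $(G \oplus b)$-computably bounded $\Pi^{0, G \oplus b}_1$ class contained in $\Ccal$. Any failing $G$ therefore produces a tractable subset of $\Ccal$, which I will turn into a $\Sigma^1_1$-witness against the immunity hypothesis.

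For the main construction I would force with Mathias conditions $(a, E)$ where $E \in [X]^\omega$, maintaining the invariant that $\Ccal$ admits no non-empty $\Sigma^{1, E}_1$ subset. Theorem~\ref{thm:spine-basis-theorem}, relativized to $E$, ensures this invariant is preserved when thinning $E$ inside a $\Sigma^{1, E}_1$ set. The main step is a case analysis: given a condition $(a, E)$, either one can extend to $(a', E')$ so that for every $G$ consistent with the condition $\Gamma^G$ contains a node $\sigma$ of length at least $t$ with $[\sigma] \cap \Ccal = \emptyset$, giving the good disjunct; or no such extension exists, in which case I would consider the $\Sigma^{1, E \oplus b}_1$ set
$$\Scal \;=\; \{\, Z \in \omega^\omega : (\exists G \in [E]^\omega)\, Z \in [\Gamma^{a \cup G}] \text{ and } \Gamma^{a \cup G} \text{ above level } t \text{ is bounded by } b \,\}.$$
By the preliminary observation, $\Scal$ is non-empty and contained in $\Ccal$, contradicting the invariant after absorbing $b$ through another application of Theorem~\ref{thm:spine-basis-theorem}. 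A Galvin-Prikry-style dichotomy then handles the universal quantifier over $G \in [Y]^\omega$, producing the desired $Y$; the moreover clause follows by one further application of Theorem~\ref{thm:spine-basis-theorem} to the $\Sigma^{1, X}_1$-definable family of such $Y$'s.

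The principal obstacle I anticipate is parameter management: the bad condition naturally uses the pruned tree $T_\Ccal = \{\sigma : [\sigma] \cap \Ccal \neq \emptyset\}$ as a parameter, whereas the hypothesis controls only $\Sigma^{1, X}_1$ sets. The essential trick is to replace ``extendibility in $\Ccal$'' by ``boundedness by $b$ above level $t$'' through the preliminary observation, and then absorb $b$ via iterated applications of the $\Sigma^1_1$-immunity basis theorem.
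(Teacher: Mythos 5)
Your preliminary observation is correct as stated (as a one-way implication), but the way you use it in the ``bad'' case is where the argument breaks down, and the breakdown is precisely at the point you flagged as a ``principal obstacle.'' The proposed replacement of ``extendibility in $\Ccal$'' by ``boundedness by $b$ above level $t$'' does not actually close the gap, for two reasons.

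First, the set
\[
\Scal = \{\, Z : (\exists G \in [E]^\omega)\, Z \in [\Gamma^{a \cup G}] \text{ and } \Gamma^{a \cup G} \text{ is $b$-bounded above level $t$}\,\}
\]
is \emph{not} contained in $\Ccal$. Your preliminary observation says: if $G$ fails both disjuncts, then $\Gamma^{a\cup G}$ is $b$-bounded above level $t$ and $[\Gamma^{a\cup G}] \subseteq \Ccal$. The converse fails: there can be a $G$ for which $\Gamma^{a\cup G}$ happens to be $b$-bounded yet $[\Gamma^{a\cup G}]$ has nothing to do with $\Ccal$ (for instance, $\Ccal \cap [\Gamma^{a \cup G}] = \emptyset$ while $\Gamma^{a\cup G}$ is a bounded tree on its own). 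Such a $G$ contributes points to $\Scal$ that lie outside $\Ccal$, so the conclusion ``$\Scal$ is a non-empty subset of $\Ccal$, contradiction'' does not follow.

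Second, even setting that aside, $\Scal$ is $\Sigma^{1, E \oplus b}_1$ and the invariant you maintain is only that $\Ccal$ has no non-empty $\Sigma^{1,E}_1$ subset. You say $b$ is ``absorbed through another application of Theorem~\ref{thm:spine-basis-theorem},'' but that theorem cannot absorb $b$: the bound $b$ depends on $\Ccal$ itself, and $\Ccal$ may well admit a non-empty $\Sigma^{1,b}_1$ subset (take $\Ccal = \{f\}$ for a non-hyperarithmetic $f$ and $b \geq f$; then $\Ccal$ is a computable-in-$b$ singleton). So the invariant cannot be strengthened to control $\Sigma^{1, E\oplus b}_1$ sets, and no application of the basis theorem will produce the needed contradiction.

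The paper circumvents this entirely by never letting $\Ccal$ (or its bound) enter the definable side of the argument. It introduces the sets
\[
\Qcal_\sigma = \{\, Y \in [X]^\omega : (\forall v \in [Y]^{<\omega})\, S^v_{|\sigma|} = \emptyset \text{ or } \sigma \in S^v_{|\sigma|} \,\},
\]
which are $\Sigma^{1,X}_1$, depend only on $\Gamma$ and the string $\sigma$, and make no reference to $\Ccal$ or $b$. The case analysis over $\sigma$ and the question of whether $\Ccal \cap [\sigma] = \emptyset$ is carried out \emph{externally}: in the compactness case, the tree of $\sigma$'s with $\Qcal_\sigma \neq \emptyset$ is shown (using that each such $\sigma$ is $\Ccal$-consistent) to trace out a non-empty $\Sigma^{1,X}_1$ subset of $\Ccal$; in the Galvin--Prikry case, one forces that the finitely many $\Ccal$-consistent strings of some fixed length never appear in $\Gamma^v$. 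This is the key idea that your proposal is missing: rather than trying to approximate $\Ccal$-membership by boundedness inside a $\Sigma^1_1$ definition, one parametrizes the $\Sigma^1_1$ sets by $\sigma$ and defers all $\Ccal$-dependence to meta-level case distinctions.
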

\begin{proof}
Given $v \in [\omega]^{<\omega}$ and~$n \in \omega$, let~$S^v_n$ be the computable set 
$\{ \tau \in \omega^n : \Gamma^v(\tau) \downarrow \}$.
For every~$\sigma \in \omega^{<\omega}$, let~$\Qcal_\sigma$ be the $\Sigma^{1,X}_1$ collection of all~$Y \in [X]^\omega$
such that for every~$v \in [Y]^{<\omega}$, $S^v_{|\sigma|} = \emptyset$ or~$\sigma \in S^v_{|\sigma|}$.

Suppose first that for every~$\ell \in \omega$, there is some~$\sigma \in \omega^{<\omega}$ of length~$\ell$
such that $\Qcal_\sigma \neq \emptyset$. 
If $\Qcal_\sigma \neq \emptyset$ for some~$\sigma \in \omega^{<\omega}$ of length at least~$t$
such that~$\Ccal \cap [\sigma] = \emptyset$, then by Theorem~\ref{thm:spine-basis-theorem},
there is some~$Y \in \Qcal_\sigma$ such that $\Ccal$ has no non-empty $\Sigma^{1,Y}_1$ subset.
Such a $Y$ and~$\sigma$ satisfy the desired properties. If $\Ccal \cap [\sigma] \neq \emptyset$
for every~$\sigma \in \omega^{<\omega}$ of length at least~$t$ such that $\Qcal_\sigma \neq \emptyset$,
then by compactness of~$\Ccal$, the set
$\{ h \in \omega^\omega : (\forall \sigma \prec h)Q_\sigma \neq \emptyset \}$
is a non-empty  $\Sigma^{1,X}_1$ subset of~$\Ccal$, contradicting our hypothesis.

Suppose now that there is some~$\ell \in \omega$ such that~$\Qcal_\sigma = \emptyset$ for every~$\sigma \in \omega^{<\omega}$ of length $l$.
Let~$\sigma_0, \dots, \sigma_{n-1}$ be the finite sequence of all $\sigma \in \omega^\ell$ such that $\Ccal \cap [\sigma] \neq \emptyset$.
This sequence is finite by compactness of~$\Ccal$. Let~$\Ecal$ be the $\Sigma^{1,X}_1$ collection of all~$Y \in [X]^\omega$
such that for every~$v \in [Y]^{<\omega}$ and every~$i < n$, $\sigma_i \not \in S^v_\ell$. We claim that $\Ecal$ is non-empty.
To see this, define a finite decreasing sequence 
$X = X_0 \supseteq X_1 \supseteq \dots \supseteq X_n$ of infinite sets such that for every~$i < n$
and every~$v \in [X_{i+1}]^{<\omega}$, $\sigma_i \not \in S^v_\ell$ as follows.
Given~$i < n$, and since $\Qcal_{\sigma_i} = \emptyset$,
apply the Galvin-Prikry theorem~\cite{Galvin1973Borel} relative to~$X_i$
to obtain a set~$X_{n+1} \in [X_i]^\omega$ such that $\sigma_i \not \in S^v_\ell$ for every~$v \in [X_{i+1}]^{<\omega}$.
By Theorem~\ref{thm:spine-basis-theorem}, there is some~$Y \in \Ecal$ such that $\Ccal$ has no non-empty $\Sigma^{1,Y}_1$ subset.
Such a $Y$ satisfies the desired conditions. This completes the proof.
\end{proof}

\begin{lemma}\label{lem:encod-mass-mathias}
Fix a Mathias condition~$\tuple{a, X}$ and a compact set~$\Ccal \subseteq \omega^\omega$
with no non-empty $\Sigma^{1,X}_1$ subset.
For every continuous function~$\Gamma$ and every $t \in \omega$, there
is a condition~$\tuple{a, Y} \subseteq \tuple{a, X}$ such that for every~$G \in \tuple{a, Y}$
and every~$H \in [G]^\omega$,
either $\Ccal \cap [\Gamma^H] = \emptyset$ or $\Gamma^H(\sigma)\downarrow = 1$
for some string $\sigma \in \omega^{<\omega}$ of length at least~$t$ such that $\Ccal \cap [\sigma] = \emptyset$.
Moreover, we can choose~$Y$ so that $\Ccal$ has no $\Sigma^{1,Y}_1$ subset.
\end{lemma}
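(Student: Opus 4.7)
The plan is to reduce Lemma~\ref{lem:encod-mass-mathias} to Lemma~\ref{lem:encod-mass-set} by iterating the latter over the $2^{|a|}$ possible intersections of a candidate $H$ with the stem~$a$. The key observation is that any infinite $H \subseteq G$ with $G \in \tuple{a, Y}$ splits uniquely as $H = b \cup W$ where $b = H \cap a$ is some subset of~$a$ and $W = H \setminus a$ is an infinite subset of~$Y$ (infinite because $a$ is finite). This reduces the statement to controlling finitely many ``shifted'' versions of~$\Gamma$, one per choice of~$b$.

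First, for each subset $b \subseteq a$, I would define the continuous functional $\Gamma_b$ by $\Gamma_b^Z = \Gamma^{b \cup Z}$; since $b$ is a fixed finite set and $\max a < \min X$, this is genuinely continuous in~$Z$. Enumerating the subsets $b_0, \ldots, b_{m-1}$ of~$a$ with $m = 2^{|a|}$, I would then construct a finite decreasing chain $X = X_0 \supseteq X_1 \supseteq \cdots \supseteq X_m$ of infinite subsets of~$X$ by applying Lemma~\ref{lem:encod-mass-set} at stage~$i$ to the data $(X_i, \Gamma_{b_i}, t, \Ccal)$, obtaining $X_{i+1} \in [X_i]^\omega$ that satisfies the encoding dichotomy for~$\Gamma_{b_i}$ and moreover for which $\Ccal$ still has no non-empty $\Sigma^{1, X_{i+1}}_1$ subset. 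I then set $Y = X_m$.

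The verification is then immediate: given any $G \in \tuple{a, Y}$ and any $H \in [G]^\omega$, write $b = H \cap a = b_i$ and $W = H \setminus a$. Then $W$ is infinite, $W \subseteq Y \subseteq X_{i+1}$, and $\Gamma^H = \Gamma_{b_i}^W$. The conclusion of Lemma~\ref{lem:encod-mass-set} applied at stage~$i$ to the infinite set $W \subseteq X_{i+1}$ gives exactly the desired dichotomy for~$\Gamma^H$.

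The only point requiring care—and arguably the sole obstacle—is the bookkeeping that keeps Lemma~\ref{lem:encod-mass-set} applicable at each stage, i.e.\ the inductive preservation of the hypothesis that $\Ccal$ admits no non-empty $\Sigma^{1, X_i}_1$ subset. This is precisely what the ``moreover'' clause of Lemma~\ref{lem:encod-mass-set} provides, and propagating it through the $m$ stages simultaneously yields the final ``moreover'' clause of the present lemma, namely that $\Ccal$ has no non-empty $\Sigma^{1, Y}_1$ subset.
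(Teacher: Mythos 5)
Your proof is correct and matches the paper's argument essentially verbatim: enumerate the subsets of~$a$, define the shifted functionals $\Gamma_b^Z = \Gamma^{b \cup Z}$, iterate Lemma~\ref{lem:encod-mass-set} over them (preserving the ``no non-empty $\Sigma^{1,\cdot}_1$ subset'' hypothesis via the ``moreover'' clause at each stage), and observe that any $H \in [G]^\omega$ with $G \in \tuple{a,Y}$ decomposes as $b \cup W$ with $W \in [Y]^\omega$. The paper merely compresses the explicit decreasing chain $X_0 \supseteq \cdots \supseteq X_m$ into the phrase ``by iterating''; the substance is identical.
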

\begin{proof}
Let~$a_0, \dots, a_{n-1}$ be the finite listing of all subsets of~$a$,
and for every~$i < n$, let~$\Gamma_i$ be the continuous function defined by
$\Gamma_i^Z = \Gamma^{a_i \cup Z}$. By iterating Lemma~\ref{lem:encod-mass-set} on each~$\Gamma_i$,
we obtain a set~$Y \in [X]^\omega$ such that $\Ccal$ has no non-empty $\Sigma^{1,Y}_1$ subset,
and for every $Z \in [Y]^\omega$, and every~$i < n$, either $\Ccal \cap [\Gamma_i^Z] = \emptyset$ or $\Gamma_i^Z(\sigma)\downarrow = 1$
for some string $\sigma \in \omega^{<\omega}$ of length at least~$t$ such that $\Ccal \cap [\sigma] = \emptyset$.

We claim that $\tuple{a,Y}$ satisfies the desired properties. Fix any~$G \in \tuple{a,Y}$ and~$H \in [G]^\omega$.
In particular, $H = a_i \cup Z$ for some~$i < n$ and~$Z \in [Y]^\omega$.
Therefore, either~$\Ccal \cap [\Gamma^H] = \Ccal \cap [\Gamma_i^Z] = \emptyset$,
or $\Gamma^H(\sigma) \downarrow = \Gamma_i^Z(\sigma) = 1$ for some string 
$\sigma \in \omega^{<\omega}$ of length at least~$t$ such that $\Ccal \cap [\sigma] = \emptyset$.
\end{proof}

\begin{proof}[Proof of Theorem~\ref{thm:comp-enc-compact-sigma11}, $(i) \Rightarrow (iii)$]
We now prove that if a compact set~$\Ccal \subseteq \omega^\omega$ has no non-empty $\Sigma^1_1$ subset,
then there is a set~$Y \in [\omega]^\omega$ such that for every $G \in [Y]^\omega$, every $G$-computably bounded $\Pi^{0, G}_1$ set is not included in $\Ccal$.

By iterating Lemma~\ref{lem:encod-mass-mathias}, build an infinite sequence
of Mathias conditions $\tuple{\emptyset, \omega} = \tuple{a_0, X_0} \supseteq \tuple{a_1, X_1} \supseteq \dots$
such that for every~$i \in \omega$, $\Ccal$ has no non-empty $\Sigma^{1,X_i}_1$ subset,
$|a_{i+1}| \geq i$,
and for every $G \in \tuple{a_{i+1}, X_{i+1}}$, every~$H \in [G]^\omega$ and every~$j < i$, either~$\Ccal \cap [\Phi_j^H] = \emptyset$
or~$\Phi_j^H(\sigma) \downarrow = 1$ for some string 
$\sigma \in \omega^{<\omega}$ of length at least~$i$ such that $\Ccal \cap [\sigma] = \emptyset$.
Take $Y = \bigcup_i a_i$ as the desired set.
By construction, for every~$G \in [Y]^\omega$ and every~$j \in \omega$, either $\Ccal \cap [\Phi_j^G] = \emptyset$,
or $\{ \sigma \in \omega^{<\omega} : \Phi_j^G(\sigma) \downarrow = 1 \wedge \Ccal \cap [\sigma] = \emptyset \}$
is infinite. In either case, $[\Phi_j^G]$ is not a $G$-computably bounded subtree of~$\Ccal$.
\end{proof}

\begin{corollary}
$\wwkl \not \leq_{soc} \rt$.
\end{corollary}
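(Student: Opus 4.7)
The plan is to apply Theorem~\ref{thm:comp-enc-compact-sigma11} to an appropriately chosen $\wwkl$-instance, then exploit the fact that every infinite set contains an infinite homogeneous subset for any coloring (the computable encodability of Ramsey's theorem).

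First, I would construct a binary tree $T \subseteq 2^{<\omega}$ of positive measure such that $[T]$ contains no hyperarithmetic member of $2^\omega$. Since the hyperarithmetic subsets of $\omega$ form a countable collection, I enumerate them as $A_0, A_1, \dots$ and for each $n$ pick a prefix $\sigma_n \prec A_n$ of length $n+2$. Then $[T] := 2^\omega \setminus \bigcup_n [\sigma_n]$ has measure at least $1 - \sum_n 2^{-(n+2)} = 1/2$, and the associated pruned tree $T$ is a valid $\wwkl$-instance containing no hyperarithmetic path. By the classical basis theorem for $\Sigma^1_1$ sets (every non-empty $\Sigma^1_1$ subset of $2^\omega$ has a hyperarithmetic element), $[T]$ can therefore have no non-empty $\Sigma^1_1$ subset whatsoever.

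Next I would apply Theorem~\ref{thm:comp-enc-compact-sigma11}: since $[T]$ is compact and admits no non-empty $\Sigma^1_1$ subset, $[T]$ is not $\Pi^0_1$ encodable. Unfolding the negation, there is an infinite set $X \subseteq \omega$ such that for every infinite $Y \subseteq X$, $[T]$ admits no non-empty $Y$-computably bounded $\Pi^{0,Y}_1$ subset. I would then verify that this $T$ refutes $\wwkl \leq_{soc} \rt$: given any $\rt$-instance $f : [\omega]^n \to k$, Ramsey's theorem applied to $f \uh [X]^n$ produces an infinite $f$-homogeneous set $H \subseteq X$; by the choice of $X$, $[T]$ has no non-empty $H$-computably bounded $\Pi^{0,H}_1$ subset, but if $H$ computed a path $P \in [T]$ the singleton $\{P\}$ would be precisely such a subset. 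Hence $H$ solves the $\rt$-instance $f$ while computing no solution of $T$, as required.

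The main obstacle is Step~1, namely the verification that the hyperarithmetic reals (being countable) can be enclosed in an open set of measure less than $1$, and that avoiding them suffices to avoid every non-empty $\Sigma^1_1$ subset of $[T]$; once these classical ingredients are in hand, the remainder is a direct contrapositive use of Theorem~\ref{thm:comp-enc-compact-sigma11} together with the trivial fact that an infinite homogeneous set for any coloring can always be chosen inside any prescribed infinite set.
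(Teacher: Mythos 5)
Your overall strategy is the same as the paper's (build a positive-measure tree $T$ with $[T]$ having no non-empty $\Sigma^1_1$ subset, then apply Theorem~\ref{thm:comp-enc-compact-sigma11} together with computable encodability of Ramsey's theorem), but Step~1 contains a genuine error. The parenthetical claim that ``every non-empty $\Sigma^1_1$ subset of $2^\omega$ has a hyperarithmetic element'' is false. The hyperarithmetic reals are \emph{not} a basis for the $\Sigma^1_1$ subsets of Cantor space: for example, if $S \subseteq \omega^{<\omega}$ is a computable tree with $[S] \neq \emptyset$ but no hyperarithmetic path, then $\{X \in 2^\omega : (\exists Y \in [S])\, Y \leq_T X\}$ is a non-empty $\Sigma^1_1$ subset of $2^\omega$ with no hyperarithmetic member. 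The correct (and strictly weaker) statement is the Gandy basis theorem, which the paper invokes: every non-empty $\Sigma^1_1$ subset of $2^\omega$ has a member Turing reducible to Kleene's $\mathcal{O}$. Consequently, your $[T]$, built to avoid the hyperarithmetic reals only, is not guaranteed to avoid every non-empty $\Sigma^1_1$ subset, and the application of Theorem~\ref{thm:comp-enc-compact-sigma11} does not go through.

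The fix is local: diagonalize against the (still countable) class of reals $\leq_T \mathcal{O}$ rather than the hyperarithmetic reals, enumerating them and deleting a cylinder of measure $2^{-(n+2)}$ around each; the resulting closed set has measure at least $1/2$ and, by the Gandy basis theorem, contains no non-empty $\Sigma^1_1$ subset. The rest of your argument (unfolding the failure of $\Pi^0_1$ encodability to get the ``bad'' set $X$, applying Ramsey's theorem within $X$, and observing that a computed path $P$ would yield the $H$-computably bounded $\Pi^{0,H}_1$ singleton $\{P\} \subseteq [T]$) is correct. The paper avoids the explicit diagonalization by instead taking $T$ to be a $\Pi^{0,\mathcal{O}}_1$ class of positive measure consisting of Martin-L\"of randoms relative to $\mathcal{O}$; such reals are automatically not $\leq_T \mathcal{O}$, so the Gandy basis theorem applies directly. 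Both constructions are fine once the right basis theorem is invoked.
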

\begin{proof}
Let~$T \subseteq 2^{<\omega}$ be a tree of positive measure
such that~$[T]$ has no non-empty $\Sigma^1_1$ subset.
Take for example $T$ to be a tree whose infinite paths are the elements of a $\Pi^{0,\Ocal}_1$ set of Martin-L\"of randoms relatively to Kleene's $O$. As the sets that are Turing below Kleene's $O$ are a basis for the $\Sigma^1_1$ subsets of $2^{\omega}$, $[T]$ cannot have any $\Sigma^1_1$ subset.

%$T = \{ \sigma \in 2^{<\omega} : K^Z(\sigma) \geq |\sigma| - c \}$ for a fixed constant~$c \in \omega$, where $Z$ is the join of all $\Sigma^1_1$ sets of integers and $K^Z(\sigma)$ is the Kolmogorov complexity of~$\sigma$ relative to~$Z$.

Fix an $\rt$-instance $f$ and suppose that every infinite $f$-homogeneous set~$H$ computes 
a infinite path through~$T$. In particular, $[T]$ has a non-empty $\Pi^{0,H}_1$ subset.
Since for every set~$X \in [\omega]^\omega$, there is an $f$-homogeneous set~$Y \in [X]^\omega$,
$[T]$ is $\Pi^0_1$ encodable.
Therefore, by Theorem~\ref{thm:comp-enc-compact-sigma11}, 
$[T]$ admits a non-empty $\Sigma^1_1$ subset, contradicting our hypothesis.
\end{proof}

Note that we make an essential use of compactness in Theorem~\ref{thm:comp-enc-compact-sigma11}.
Actually, there exist $\Pi^0_1$ encodable closed sets~$\Ccal \subseteq \omega^\omega$
with no $\Sigma^1_1$ subset, as witnesses the following lemma.

\begin{lemma}
Let~$Z \subseteq \omega$ be a set with no infinite set Turing below Kleene's $O$ subset in either it or its complement.
The set~$\Ccal_Z = \{ Y \in [\omega]^\omega : Y \subseteq Z \vee Y \subseteq \overline{Z} \}$
is $\Pi^0_1$ encodable and has no non-empty $\Sigma^1_1$ subset.
\end{lemma}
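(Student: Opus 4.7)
The plan is to verify the two claims about $\Ccal_Z$ independently; both reduce to a single combinatorial observation, together with Kleene's basis theorem already invoked in the corollary above.

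\emph{$\Pi^0_1$ encodability.} Given any infinite $X \subseteq \omega$, at least one of the two sets $X \cap Z$ and $X \cap \overline{Z}$ is infinite. Take $Y$ to be such an infinite subset. Then $Y \in [X]^\omega$ and, by construction, $Y \subseteq Z$ or $Y \subseteq \overline{Z}$, so $Y \in \Ccal_Z$. Identifying $Y$ with its increasing enumeration in $\omega^\omega$, the singleton $\{Y\}$ is $Y$-computable and therefore is a non-empty $Y$-computably bounded $\Pi^{0,Y}_1$ subset of $\Ccal_Z$, witnessing $\Pi^0_1$ encodability.

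\emph{No non-empty $\Sigma^1_1$ subset.} Suppose toward contradiction that $\Dcal \subseteq \Ccal_Z$ is a non-empty $\Sigma^1_1$ set. By the basis theorem used in the proof of the previous corollary, namely that the sets Turing below Kleene's $O$ form a basis for the $\Sigma^1_1$ sets, there is some $Y \in \Dcal$ with $Y \leq_T O$. But then $Y$ is an infinite subset of $\omega$, computable from $O$, and contained in either $Z$ or $\overline{Z}$, directly contradicting the assumption that $Z$ has no infinite $O$-computable subset inside $Z$ or $\overline{Z}$.

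The hypothesis on $Z$ is of course realizable: any set $Z$ that is sufficiently generic relative to $O$ (for instance, one-generic over $O$) has no infinite $O$-computable subset in either itself or its complement, so the lemma is not vacuous. There is no real obstacle in the argument; the only delicate point is the representation of $[\omega]^\omega$ inside $\omega^\omega$, but both natural choices (increasing enumeration or characteristic function) are Turing equivalent to the set itself, so the $\Pi^0_1$ encodability witness and the $\Sigma^1_1$ basis element pass through either encoding without change.
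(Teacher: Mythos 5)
Your proof is correct and matches the paper's argument in both parts: for $\Pi^0_1$ encodability you take the infinite one of $X \cap Z$ and $X \cap \overline{Z}$ and observe that (via its increasing enumeration) it is a $Y$-computably bounded $\Pi^{0,Y}_1$ singleton inside $\Ccal_Z$, and for the non-existence of a $\Sigma^1_1$ subset you invoke the Kleene basis theorem exactly as the paper does. The paper's proof is terser, but there is no difference in substance; your added remarks on realizability of the hypothesis and on the choice of representation of $[\omega]^\omega$ in $\omega^\omega$ are correct and, if anything, make explicit a point the paper leaves implicit.
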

\begin{proof}
For any~$X \in [\omega]^\omega$, either $X \cap Z$, or~$X \cap \overline{Z}$ is infinite
and therefore belongs to~$\Ccal_Z$. Thus $\Ccal_Z$ is $\Pi^0_1$ encodable. Also as the sets Turing below Kleene's $O$ are a basis for the  $\Sigma^1_1$ subsets of $2^\omega$, $\Ccal_Z$ cannot have a non-empty $\Sigma^1_1$ subset.
%
%
%Suppose that $\Ccal_Z$ admits a non-empty $\Sigma^1_1$ subset~$\Dcal$.
%The set~$E = \{ x \in \omega : (\exists X \in \Ccal)(\forall Y \in \Ccal)[Y \geq X \wedge x \in X] \}$
%is an infinite $\Delta^1_2$ subset in either $Z$ or its complement, contradicting our hypothesis.
\end{proof}

\section{Summary and open questions}

In this last section, we summarize the relations between variants of Ramsey's theorem
and of K\"onig's lemma over strong omniscient computable reducibility,
and state two remaining open questions.

\begin{center}
\begin{figure}[htbp]\label{fig:summary}
\begin{tikzpicture}[x=2cm, y=1.5cm, 
		node/.style={minimum size=2em, inner sep=3pt},
		arrow/.style={very thick,->}
]

	\node[draw, minimum width = 7cm, minimum height = 7cm] at (3, 3) {};
	\node[node] (kl) at (3, 5) {$\kl$};
	\node[node] (wkl) at (4, 5) {$\wkl$};
	\node[node] (wwkl) at (4, 4) {$\wwkl$};
	\node[node] (rt) at (3, 4) {$\rt$};
	\node[node] (rt23) at (3, 3) {$\rt^2_3$};
	\node[node] (rt22) at (4, 3) {$\rt^2_2$};
	\node[node] (srt2inf) at (2, 2) {$\srt^2_{<\infty}$};
	\node[node] (srt23) at (3, 2) {$\srt^2_3$};
	\node[node] (srt22) at (4, 2) {$\srt^2_2$};
	\node[node] (rt1inf) at (2, 1) {$\rt^1_{<\infty}$};
	\node[node] (rt13) at (3, 1) {$\rt^1_3$};
	\node[node] (rt12) at (4, 1) {$\rt^1_2$};

	\draw[arrow,<->] (kl) -- (wkl);
	\draw[arrow] (wkl) -- (wwkl);
	\draw[arrow] (kl) -- (rt);
	\draw[arrow] (rt22) -- (srt2inf);
	\draw[arrow] (rt23) -- (rt22);
	\draw[arrow, dotted] (rt) -- (rt23);
	\draw[arrow, dotted] (srt2inf) -- (srt23);
	\draw[arrow, dotted] (rt1inf) -- (rt13);
	\draw[arrow] (srt23) -- (srt22);
	\draw[arrow] (srt2inf) -- (rt1inf);
	\draw[arrow] (srt23) -- (rt13);
	\draw[arrow] (srt22) -- (rt12);
	\draw[arrow] (rt13) -- (rt12);

	\draw[arrow, thick] (rt22) -- (rt);

	\node[node, fill=white] (quest) at (3.5, 3.5) {?};
\end{tikzpicture}
\caption{Versions of $\rt$ and $\kl$ under~$\leq_{soc}$}
\end{figure}
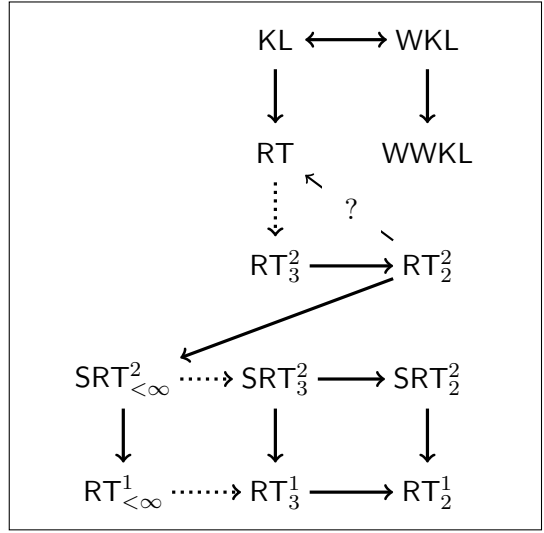
\end{center}

In Figure~\ref{fig:summary}, and plain arrow from~$\Psf$ to~$\Qsf$
means that~$\Qsf \leq_{soc} \Psf$. A dotted arrow indicates a hierarchy
between the statements. Except the open arrow from $\rt^2_2$ to~$\rt$,
the missing arrows are all known separations and can be derived from Section~\ref{subsect:kl-and-rt}.
The remaining questions are of two kinds: whether the number of colors and the size of the tuples
has a structural impact reflected over strong omniscient computable reducibility.

\begin{question}\label{quest:rtnkp-rtnk-soc}
Is $\rt^n_{k+1} \leq_{soc} \rt^n_k$ whenever~$n, k \geq 2$?
\end{question}

\begin{question}\label{quest:rtnpk-rtnk-soc}
Is $\rt^{n+1}_k \leq_{soc} \rt^n_k$ whenever~$n, k \geq 2$?
\end{question}

Note that a negative answer to Question~\ref{quest:rtnkp-rtnk-soc}
would give a negative answer to Question~\ref{quest:rtnpk-rtnk-soc}
since $\rt^n_{<\infty} \leq_{sW} \rt^{n+1}_2$ (see any of~\cite{Brattka2015Uniform,Hirschfeldtnotions}).

\vspace{0.5cm}

\noindent \textbf{Acknowledgements}.
The second author is funded by the John Templeton Foundation (`Structure and Randomness in the Theory of Computation' project). The opinions expressed in this publication are those of the author(s) and do not necessarily reflect the views of the John Templeton Foundation.

\vspace{0.5cm}

\end{document}